\newcommandtwoopt\eck[3][G][\Bbbk]{H^*_{#1}(#3; #2)}
\newcommand\ec[2][G]{H^*_{#1}(#2)}
\newcommandtwoopt\csk[2][G][\Bbbk]{H^*(B{#1}; #2)}
\newcommand\cs[1][G]{H^*(B#1)}
\newcommand{\Z}{\mathbb Z}
\newcommand{\F}{\mathbb F}
\newcommand{\R}{\mathbb R}
\newcommand{\C}{\mathbb C}
\newcommand{\Q}{\mathbb Q}
\newcommand{\Zd}{\mathbb{Z}/2\mathbb{Z}}
\newcommand{\bk}{\Bbbk}
\DeclareMathOperator{\coker}{coker}
\newtheorem{theorem}{Theorem}[section]
\newtheorem{proposition}[theorem]{Proposition}%
\newtheorem{corollary}[theorem]{Corollary}
\newtheorem{lemma}[theorem]{Lemma}
\newtheorem{example}[theorem]{Example}%
\newtheorem{remark}[theorem]{Remark}%
\newtheorem{definition}[theorem]{Definition}%
\begin{document}

\title[Equivariant cohomology for semidirect product actions]{Equivariant cohomology for semidirect product actions}


\author{\fnm{Sergio} \sur{Chaves}}\email{schavesr@math.rochester.edu}



\affil{\orgdiv{Department of Mathematics}, \orgname{University of Rochester}, \state{NY}, \country{USA}}




\abstract{	The rational Borel equivariant cohomology for actions of a compact connected Lie group is determined by restriction of the action to a maximal torus. We show that a similar reduction holds for any compact Lie group $G$ when there is a closed subgroup $K$ such that the cohomology of the classifying space $BK$ is free over the cohomology of $BG$ with field coefficients. This provides a different approach to the equivariant cohomology of a space with a torus action and a compatible involution, and we relate this description with results for 2-torus actions.}

\keywords{equivariant cohomology, syzygies, compact Lie groups, semidirect product}


\pacs[MSC Classification]{Primary 55N91; Secondary 54H15}

\maketitle

\section{Introduction}
Let $G$ be a compact Lie group and $X$ be a finite $G$-CW complex. The $G$-equivariant cohomology of $X$ with coefficients over a field $\Bbbk$ is defined as the singular cohomology of the homotopy quotient $X_G := EG \times_G X$; namely, $H^*_G(X;\Bbbk):= H^*( X_G; \Bbbk)$. It becomes canonically a module over the cohomology of the classifying space $H^*(BG; \Bbbk)$. We will omit cohomology coefficients as long as there is no ambiguity.  We say that $X$ is $G$-equivariantly formal over $\Bbbk$ if the restriction map $H^*_G(X) \rightarrow H^*(X)$ is surjective. In this situation, the Leray-Hirsch theorem implies that $H^*_G(X)$ is a free module over $H^*(BG)$.

Freeness of equivariant cohomology has been generalized to the study of syzygy modules. A detailed discussion of this topic was started by Allday-Franz-Puppe  \cite{allday2014equivariant} for torus actions  over a field of characteristic zero. Recall that a finitely generated module $M$ over a commutative ring $R$ is a $j$-th syzygy if there is an exact sequence

\begin{equation}\label{eq:syzygy}
0 \rightarrow M \rightarrow F_1 \rightarrow \cdots \rightarrow F_j
\end{equation}

of free $R$-modules $F_k$ for $1 \leq k \leq j$. If $R$ is a polynomial algebra in $n$ variables over a field $\bk$, then the $n$-th syzygy modules are free as a consequence of Hilbert Syzygy theorem.

The study of syzygies in equivariant cohomology for torus actions can be extended to actions of any compact connected Lie group actions by considering the restriction of the action to a maximal torus \cite{franz2016syzygies}, and elementary $p$-abelian groups actions by restriction and transfer of the action to an associated torus action \cite{ptori}.

In this paper, we study the more general problem of characterizing syzygies in equivariant cohomology for compact Lie group actions in terms of actions of suitable closed subgroups. In particular, we allow the acting group to be disconnected, as this approach includes interesting actions of finite groups and orthogonal matrices among others. This study generalizes the relationship between compact connected Lie groups and their maximal tori in equivariant cohomology as discussed before.

Let us consider a compact Lie group $G$ and let $K \subseteq G$ be a closed subgroup. We denote by $W = N_G(K)/C_G(K)$ the Weyl group of $K$ in $G$. We also consider cohomology with coefficients over a field $\bk$. Suppose that the homogeneous space $G/K$ is connected, that the canonical map $H^*(BK) \rightarrow H^*(G/K)$ arising from the fibration $G/K \rightarrow BK \rightarrow BG$  is surjective  and that there is an isomorphism of algebras $H^*(BK)^W \cong H^*(BG)$. One of the main results of this paper is the following.

\begin{theorem}\label{teo1.1}
	Let $G$ be a compact Lie group that admits a closed subgroup $K$ satisfying the conditions above. Let $X$ be a $G$-space such that $H^*(X)^G = H^*(X)$. Then $W$ acts on the $K$-equivariant cohomology of $X$, there is a natural isomorphism of $H^*(BG)$-algebras $H^*_G(X) \cong H^*_K(X)^W$
	and a natural isomorphism of $H^*(BK)$-algebras $H^*_K(X) \cong H_G^*(X) \otimes_{H^*(BG)} H^*(BK)$.
\end{theorem}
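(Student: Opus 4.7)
The plan is to apply the Leray--Hirsch theorem twice---once to the universal fibration $G/K \to BK \to BG$ and once to its pullback along $EG \times_G X \to BG$---and then to exploit flatness in order to commute $W$-invariants with a tensor product. I would first compare the two $G/K$-fibrations produced by $X \to \mathrm{pt}$: the universal one $G/K \to BK \to BG$ and the Borel one $G/K \to EG \times_K X \to EG \times_G X$. The surjectivity hypothesis lets me pick classes $b_1,\dots,b_r \in H^*(BK)$ whose restrictions to $H^*(G/K)$ form a $\bk$-basis; this surjectivity already forces $\pi_1(BG) = \pi_0(G)$ to act trivially on $H^*(G/K)$, since the image must land in the monodromy invariants. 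Leray--Hirsch then yields $H^*(BK) \cong H^*(BG) \otimes H^*(G/K)$ as $H^*(BG)$-modules, making $H^*(BK)$ free over $H^*(BG)$.

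Pulling the $b_i$ back via $EG \times_K X \to BK$ produces classes in $H^*_K(X)$ that still restrict to the chosen basis of $H^*(G/K)$ in any fiber. A second application of Leray--Hirsch to the left-hand fibration, after checking that its local coefficient system is trivial (using $H^*(X)^G = H^*(X)$ to control the action of $\pi_1(EG \times_G X)$ on the fiber), delivers an $H^*_G(X)$-module isomorphism $H^*_K(X) \cong H^*_G(X) \otimes_{H^*(BG)} H^*(BK)$. To promote this to an algebra isomorphism I would observe that the canonical ring map
\[ H^*_G(X) \otimes_{H^*(BG)} H^*(BK) \to H^*_K(X) \]
induced by the two compatible pullback algebra homomorphisms coincides with the module isomorphism just obtained, so the algebra structures agree.

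For the Weyl group statement, let $N_G(K)$ act on $EG$ on the right, descending to a free $W$-action on $EG \times_K X$ that covers the trivial action on $EG \times_G X$. This gives a $W$-action on $H^*_K(X)$ fixing $H^*_G(X)$, and under the identification above, $W$ acts on the $H^*(BK)$-factor alone by the standard action. The hypothesis $H^*(BK)^W = H^*(BG)$ fits into the exact sequence of $H^*(BG)$-modules
\[ 0 \to H^*(BG) \to H^*(BK) \xrightarrow{\,x \mapsto (x-wx)_w\,} \bigoplus_{w \in W} H^*(BK). \]
Choosing a free basis of $H^*(BK)$ over $H^*(BG)$ containing $1 \in H^*(BK)$, the inclusion of $H^*(BG)$ splits as an $H^*(BG)$-module map, so its cokernel---and hence the image in the sequence above---is free over $H^*(BG)$. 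The sequence therefore stays exact after tensoring with $H^*_G(X)$ over $H^*(BG)$, and taking kernels in the tensored sequence identifies $H^*_K(X)^W$ with $H^*_G(X)$.

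The main obstacles I expect are two. First, verifying triviality of the Leray--Hirsch local coefficient system for the Borel fibration---a monodromy computation over $\pi_1(EG \times_G X)$ that must pass through the hypothesis $H^*(X)^G = H^*(X)$. Second, the commutation of $W$-invariants with $\otimes_{H^*(BG)}$ is delicate over an arbitrary field $\bk$, since averaging is unavailable when $|W|$ is not invertible in $\bk$; the flatness/splitting argument above---exploiting that $H^*(BK)$ splits off $H^*(BG)$ as an $H^*(BG)$-module, though not $W$-equivariantly---is what replaces the averaging trick, and it is the step most at risk of being mishandled.
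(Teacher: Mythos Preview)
Your approach is essentially the paper's: Leray--Hirsch on the pullback $G/K$-fibration gives the tensor-product isomorphism, and then $W$-equivariance of the canonical algebra map together with passage to invariants yields $H^*_G(X)\cong H^*_K(X)^W$. Your exact-sequence splitting argument for commuting $(-)^W$ with $\otimes_{H^*(BG)}$ is in fact more careful than the paper, which simply asserts this step; one small misplacement is that the hypothesis $H^*(X)^G=H^*(X)$ is not needed for the Leray--Hirsch step (the classes pulled back from $H^*(BK)$ already restrict to a basis of $H^*(G/K)$, so no separate monodromy check is required)---the paper instead invokes that hypothesis when arguing, via the Serre spectral sequence of $X\to X_G\to BG$, that inner automorphisms act trivially on $H^*_G(X)$.
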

Observe that surjectivity of the map $H^*(BK) \rightarrow H^*(G/K)$ allows us to describe the cohomology of the homogeneous space $G/K$ in terms of the cohomology of the classifying spaces of $G$ and $K$. This applies to the cohomology of homogeneous spaces of Lie groups \cite{baum1968cohomology}, \cite{may1968cohomology},  and  the equivariant cohomology of Hamiltonian actions of non-abelian compact connected Lie groups in symplectic geometry \cite{baird2018cohomology}. 
As a consequence of Theorem \ref{teo1.1}, we can characterize syzygies in $G$-equivariant cohomology in terms of the $K$-equivariant cohomology.

\begin{corollary}
	The module $\ec{X}$ is a $j$-th syzygy over $\cs$ if and only if $\ec[K]{X}$ is a $j$-th syzygy over $\cs[K]$.
\end{corollary}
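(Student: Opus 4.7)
The plan is to combine the isomorphism $\ec[K]{X} \cong \ec{X} \otimes_{\cs} \cs[K]$ from Theorem \ref{teo1.1} with the freeness of $\cs[K]$ over $\cs$, and then apply faithfully flat descent of the syzygy property.

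First I would verify that $\cs[K]$ is a free $\cs$-module. Indeed, the free extension pair hypothesis states that $\cs[K] \to \ck{G/K}$ is surjective; applying Leray--Hirsch to the fibration $G/K \to BK \to BG$ then gives an isomorphism $\cs[K] \cong \cs \otimes_{\bk} \ck{G/K}$ of $\cs$-modules. Since the cohomology of the homogeneous space is finite-dimensional in each degree, the extension $\cs \hookrightarrow \cs[K]$ is in particular faithfully flat.

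The forward implication is then immediate: tensoring an exact sequence
\[
0 \to \ec{X} \to F_1 \to F_2 \to \cdots \to F_j
\]
of free $\cs$-modules against the flat module $\cs[K]$ preserves exactness and carries free $\cs$-modules to free $\cs[K]$-modules.

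The converse is the main obstacle, as one must descend a syzygy structure through the flat extension. I would use Auslander's characterization: a finitely generated module $M$ over a Noetherian ring $R$ is a $j$-th syzygy if and only if $\ext^i_R(\operatorname{Tr}(M), R) = 0$ for $1 \leq i \leq j$, where $\operatorname{Tr}(M)$ denotes the Auslander transpose. Both $\operatorname{Tr}$ and $\ext$ commute with flat base change on finitely presented modules: one has $\operatorname{Tr}_{\cs[K]}\!\bigl(\ec{X} \otimes_{\cs} \cs[K]\bigr) \cong \operatorname{Tr}_{\cs}(\ec{X}) \otimes_{\cs} \cs[K]$, together with $\ext^i_{\cs[K]}(N \otimes_{\cs} \cs[K], \cs[K]) \cong \ext^i_{\cs}(N, \cs) \otimes_{\cs} \cs[K]$ for finitely presented $N$. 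Faithful flatness then ensures that vanishing of these Ext modules over $\cs[K]$ is equivalent to their vanishing over $\cs$, yielding the desired equivalence of syzygy orders.
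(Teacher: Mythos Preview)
Your argument is correct and rests on the same two inputs as the paper's proof (Proposition~\ref{prop2.4}): the isomorphism $\ec[K]{X}\cong\ec{X}\otimes_{\cs}\cs[K]$ and the freeness of $\cs[K]$ over $\cs$ coming from Leray--Hirsch. Where you diverge is in the homological criterion used to push the syzygy property across the extension. The paper invokes the depth/regular-sequence characterization of $j$-th syzygies from \cite[\S16.E]{bruns2006determinantal} and simply asserts the ``algebraic fact'' that this transfers along a finite free ring extension; you instead use the Auslander--Bridger criterion $\ext^i(\operatorname{Tr} M,R)=0$ for $1\le i\le j$, together with the compatibility of $\operatorname{Tr}$ and $\ext$ with flat base change and faithful flatness for the descent step. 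Both criteria are equivalent to the syzygy condition over the polynomial rings that arise here, so either route closes the argument. Your version has the virtue of actually spelling out why the converse direction holds, and it only needs faithful flatness rather than finite freeness; the paper's version is terser but leans on a citation for the key algebraic step. One small sharpening: for the Leray--Hirsch step you want $\ck{G/K}$ to be \emph{totally} finite-dimensional (true since $G/K$ is a compact manifold), not just finite in each degree---this is what makes $\cs[K]$ finitely generated over $\cs$, which you need for the finite-presentation hypotheses in the $\operatorname{Tr}/\ext$ base-change identities.
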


With our methods, we recover in equivariant cohomology the classical results for compact connected Lie groups with cohomology over rational coefficients. They also allow us to study the equivariant cohomology for actions of semidirect product of groups. For example, actions of some matrix groups, and torus action with compatible antisymplectic involutions. In particular, the latter case has been of interest in symplectic geometry. Namely, let $M$ be a symplectic manifold with a symplectic action of a torus $T$ and an antisymplectic compatible involution $\tau$. The maximal elementary $2$-abelian subgroup $T_2$ of $T$ acts on the subspace of fixed points $M^\tau$, and if $M$ is $T$-equivariantly formal over $\Q$, then $M^\tau$ is $T_2$-equivariantly formal over $\F_2$. See \cite{A}, \cite{D}, \cite{Fr},

The latter situation motivates our study of  equivariant cohomology for semidirect product actions; in particular, we use it to approach the symplectic setting described above by considering the equivariant cohomology for actions of the group $T \rtimes \Zd$. 

This document is organized as follows: In Section \ref{se:FEP}, we discuss free extension pairs and the reduction of syzygies in equivariant cohomology for a pair of groups satisfying this property. In Section \ref{se:toruscomp}, we approach torus actions and compatible involutions by looking at the induced action of the semidirect product of a torus and a 2-tori. In Section \ref{se:2torus},  we explore the reduction of syzygies in equivariant cohomology for torus actions with compatible invlution to 2-torus actions. Finally we discuss a topological generalization of Hamiltonian actions on a symplectic manifold with an antisymplectic compatible involution using the results discussed in this document.

\section{Free extension pairs}\label{se:FEP}

Let $G$ be a compact connected Lie group and $T$ be a maximal torus in $G$.  Since the module $H^*(BT; \Q)$ is free over $H^*(BG;\Q)$ via the map induced by the inclusion, the rational equivariant cohomology of a $G$-space $X$ is completely determined by the restricted action of $T$ on $X$. There is an isomorphism of $H^*(BT;\Q)$-algebras

\begin{equation}\label{eq:maxtorus}
H^*_T(X;\Q) \cong H^*_G(X;\Q) \otimes_{H^*(BG;\Q)} H^*(BT;\Q)
\end{equation}
and an isomorphism of $H^*(BG;\Q)$-algebras

\begin{equation}
H^*_G(X;\Q) \cong H^*_T(X;\Q)^W
\end{equation}
where $W = N_G(T)/T$ is the Weyl group of $T$ in $G$ \cite[Theorem.2.2]{leray1950homologie}.

We generalize this situation by introducing the following definition motivated by \cite{baird2018cohomology}.
\begin{definition}\label{def2.1}
	Let $G$ be a compact Lie group and $K$ be a closed subgroup such that $G/K$ is path-connected. The pair $(G,K)$ is a  \textit{free extension pair} over a field $\bk$ if the map $H^*(BK;\bk) \rightarrow H^*(G/K;\bk)$ is surjective. 
\end{definition} 
Observe that this condition implies that the action of $G$ on the cohomology of $G/K$ is trivial. Moreover, $H^*(BK; \bk)$ becomes a finitely generated free $H^*(BG;\bk)$-module \cite[Theorem III.4.4]{mimura}. Consequently, we get the following result.

\begin{proposition}\label{prop2.1}
	Let $(G,K)$ be a free extension pair over $\bk$ and $X$ be a $G$-space such that $G$ acts trivially on the cohomology of $X$. There is a natural isomorphism of $H^*(BK)$-modules.
 
	\begin{equation}\label{eq:fep}
	\ec[K]{X} \cong \ec{X} \otimes_{\cs} \cs[K]
	\end{equation}
 where $X$ is a $K$-space by restriction of the $G$-action.
\end{proposition}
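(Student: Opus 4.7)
The plan is to exhibit the claimed isomorphism as a direct instance of Proposition \ref{prop2.0} applied to a suitable pullback square. The key observation is that the $K$-equivariant cohomology of $X$ can be computed using the Borel construction $X_K := EG \times_K X$, since $EG$ is contractible and acts freely on $K$ by restriction, hence serves as a model for $EK$. There is then a natural fibration
\[
G/K \longrightarrow EG \times_K X \longrightarrow EG \times_G X
\]
with projection $[e,x]_K \mapsto [e,x]_G$, and my aim is to identify this as the pullback of the universal fibration $G/K \to BK \to BG$ along the canonical map $EG \times_G X \to BG$ induced by the $G$-equivariant projection $X \to \{\ast\}$.

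The first step is to construct the comparison map $EG \times_K X \to (EG \times_G X) \times_{BG} BK$ sending $[e,x]_K \mapsto ([e,x]_G,[e]_K)$ and verify that it is a well-defined homeomorphism over $EG \times_G X$. This is a routine check using the defining $G$- and $K$-actions on $EG \times X$; the inverse assigns to a pair $([e,x]_G,[f]_K)$ with $f = eg$ the class $[f,g^{-1}x]_K$, and well-definedness on both sides requires tracking the $G$- and $K$-equivariances carefully.

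Once the pullback description is in place, the free extension pair hypothesis for $(G,K)$ gives surjectivity of $\cs[K] \to \ck{G/K}$, so the hypotheses of Proposition \ref{prop2.0} are met with $F = G/K$, $E = BK$, $B = BG$, and base space $EG \times_G X$. Applying that proposition then yields a natural isomorphism of $\cs[K]$-modules
\[
\ec[K]{X} \cong \ec{X} \otimes_{\cs} \cs[K],
\]
which is exactly the statement. Naturality in $X$ is inherited from the naturality built into Proposition \ref{prop2.0} together with that of the classifying map $EG \times_G X \to BG$.

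The main obstacle will be making the pullback identification precise; although it is a standard fact, one must handle the conventions for the $G$- and $K$-actions on $EG \times X$ carefully and verify that the comparison map is a map of \emph{fibrations} over $EG \times_G X$ rather than merely of spaces, so that the fiber $G/K$ is identified on the nose and Proposition \ref{prop2.0} applies as intended.
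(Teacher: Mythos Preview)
Your proposal is correct and follows essentially the same approach as the paper: both arguments identify the Borel constructions $X_K$ and $X_G$ as sitting in a pullback square over the fibration $G/K \to BK \to BG$, and then invoke Proposition~\ref{prop2.0} using the free extension pair hypothesis. You spell out more explicitly the model $EG \times_K X$ for $X_K$ and the comparison map realizing the pullback, whereas the paper simply asserts the pullback diagram; but the underlying strategy and the key lemma used are identical.
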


\begin{proof}
	The Borel constructions $X_K$ and $X_G$ sit in a pullback diagram
 
	\[
	\begin{tikzcd}
	X_K \rar\dar& X_G \dar \\
	BK  \rar& BG
	\end{tikzcd}.
	\]
 
    There is an Eilenberg-Moore spectral sequence converging to the cohomology of $X_K$ with $E_2$-term given by 
 
    \[  E_2^{*,*} = \text{Tor}_{H^*(BG)}(H^*(BK), H^*_G(X)).
    \]
    As $(G,K)$ is a free extension pair, $H^*(BK)$ is a free $H^*(BG)$-module and thus the higher Tor term vanish. Therefore, the spectal sequence collapses at the zeroth column and thus the isomorphism 
    (\ref{eq:fep}) follows. The naturality of the isomorphism is a consequence of the naturality of the spectral sequences.\qedhere
\end{proof}
This result allows us to describe the syzygies in $G$-equivariant cohomology in terms of the $K$-equivariant cohomology analogously to the reduction from non-abelian compact connected Lie group actions to torus actions \cite[Prop.4.2]{franz2016syzygies} as we state in the following result.

\begin{proposition}\label{prop2.4}
	Let $(G,K)$ be a free extension pair and $X$ be a $G$-space. For any $j \geq 1$, $\ec{X}$ is a $j$-th syzygy over $\cs$ if and only if $\ec[K]{X}$ is a $j$-th syzygy over $\cs[K]$.
\end{proposition}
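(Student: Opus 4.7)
The plan is to combine the tensor-product description from Proposition~\ref{prop2.1} with the observation that, under the free extension hypothesis, $\cs[K]$ is a finitely generated free (and hence faithfully flat) module over $\cs$.

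The forward implication is essentially formal. Given an exact sequence
\[
0 \to \ec{X} \to F_1 \to \cdots \to F_j
\]
with each $F_i$ free over $\cs$, I would tensor the entire sequence over $\cs$ with $\cs[K]$. Flatness preserves exactness; base change sends free modules to free modules; and by Proposition~\ref{prop2.1} the leftmost term becomes $\ec[K]{X}$. This yields the required $j$-th syzygy sequence over $\cs[K]$.

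The backward implication is the main obstacle, and my plan is to recast the $j$-th syzygy property in a form that is preserved under faithfully flat base change, in the spirit of Franz's reduction from a compact connected Lie group to its maximal torus \cite[Prop.4.2]{franz2016syzygies}. Concretely, I would appeal to the depth characterization of syzygies over the graded Noetherian cohomology rings in play, namely that $M$ is a $j$-th syzygy if and only if $\depth M \geq j$. Since the fibre $\cs[K] \otimes_{\cs} \bk \cong \ck{G/K}$ is finite-dimensional over $\bk$ and thus of depth zero, the standard depth formula for faithfully flat local extensions yields
\[
\depth_{\cs[K]}\!\bigl(\ec{X} \otimes_{\cs} \cs[K]\bigr) = \depth_{\cs}(\ec{X}) + \depth\bigl(\cs[K] \otimes_{\cs} \bk\bigr) = \depth_{\cs}(\ec{X}),
\]
and Proposition~\ref{prop2.1} identifies the left-hand side with $\depth_{\cs[K]}(\ec[K]{X})$, giving the equivalence. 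If one wishes to avoid Cohen--Macaulay hypotheses on the cohomology rings, the alternative route is to characterize $j$-th syzygies via the vanishing of certain $\ext$ groups and descend that vanishing through faithful flatness; this is the principal technical issue one needs to confront.
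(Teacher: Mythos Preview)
Your approach is essentially the paper's: both combine Proposition~\ref{prop2.1} with the fact that $\cs[K]$ is a finitely generated free $\cs$-module, and then invoke a homological characterization of syzygies that is stable under such base change. The paper phrases this via the regular-sequence characterization of syzygies from \cite[\S16.E]{bruns2006determinantal} rather than via depth, but these are close cousins, and your explicit caveat about Cohen--Macaulay hypotheses (with the $\ext$-vanishing fallback) is if anything more careful than the paper's one-line citation.
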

\begin{proof}
	It is a consequence of the following algebraic fact: Let $R,S$ be rings such that $S$ is a free finitely generated $R$-module. Let $A$ be an $S$-algebra and $B$ and $R$-algebra such that $A \cong B \otimes_R S$ as
	$S$-modules. Then $A$ is a $j$-th syzygy over $S$ if and only if $B$ is a $j$-th syzygy over $R$. The result  follows by combining this fact, the remark after Definition \ref{def2.1} and Proposition \ref{prop2.1}.
\end{proof}

These are examples of free extension pairs besides the cases when $G$ is a compact connected Lie group and $K$ is a its maximal torus,

\begin{example}
	\
	\normalfont
	
	$\bullet$ When $G$ is a torus and $K$ is the maximal elementary abelian $p$-subgroup of $K$.
	
	
	$\bullet$. Let $p=2$. When $G = O(n), SO(n), U(n)$ or $SU(n)$ and $K$ is a maximal elementary abelian $p$-subgroup of $G$.
	
	
\end{example}

Let $\mathbb{F} = \R, \C$ or $\mathbb{H}$. The group of unitary matrices  $U(n+1;\F)$ acts transitively on $\F^{n+1}$ and hence on $\F P^{n}$ (here unconventionally we intend $U(n;\R) = O(n)$). For any point $x \in \F P^{n}$, the isotropy group is homeomorphic to $U(n;\F) \times U(1;\F)$. In the case of $\F = \R$ or $\C$ and $SU(n+1;\F)$ acting on it, the isotropy group is given by $U(n;\F)$. With this remark, we state and prove the following result.

\begin{proposition}\label{prop_matrix}
	Let $n \geq 1$ and $(G_n,K_n) = (SO(n), O(n)), (SU(n), U(n))$ or $(U(n;\F), U(n;\F)\times U(1;\F))$.
	There is an embedding of $K_n$ into $G_{n+1}$ such that $(G_{n+1}, K_n)$ is a free extension pair over $\F_2$ in the first case and over an arbitrary field in the other two cases.
\end{proposition}
\begin{proof}
	Let $\Sigma_n = \R P^n, \C P^n$ or $\mathbb{H} P^n$. There is a transitive action of $G_{n+1}$ on $\Sigma_n$ and thus the equivariant cohomology $H^*_{G_{n+1}}(\Sigma_n)$ is isomorphic to $H^*(B(G_{n+1})_x)$ for any $x \in \Sigma_n$. Using homogeneous coordinates on $\Sigma_n$, we see that the isotropy group of $x = [0:\cdots : 0:1]$ is given by  $(G_{n+1})_x \cong K_n$ and thus the inclusion map $(G_{n+1})_x \rightarrow G_{n+1}$ induces an embedding of $K_n$ into $G_{n+1}$ and so $\Sigma_n \cong G_{n+1}/K_n$ by the orbit-stabilizer Theorem. Furthermore, since $H^*_{G_{n+1}}(\Sigma_n) \cong H^*(BK_n)$, the restriction map $H^*_{G_{n+1}}(\Sigma_n) \rightarrow H^*(\Sigma_n)$ is surjective as it coincides with the polynomial restriction  $\F[x_1,\ldots, x_n] \rightarrow \F[x_1]/(x_1)^{n+1} $.  This shows that $\Sigma_n$ is $G_{n+1}$-equivariantly formal. Combining both facts we have that $(G_{n+1}, K_n)$ is a free extension pair.
\end{proof}
\begin{remark}
Let $G,K$ be topological groups, let $\alpha\colon G \rightarrow K$ be a group homomorphism and let $X$ be a $K$-space. Then $X$ can be made into a $G$-space via the map $\alpha$ and there is an induced map in cohomology $\alpha^*: H^*_K(X) \rightarrow H^*_G(X)$. This map is induced by the equivariant map under the induced action on $EG \times X$ since $f(g(z,x))= \alpha(g) f(z,x)$.
\end{remark}
Recall that for $K \subseteq G$  a closed subgroup of a Lie group $G$, the Weyl group of $K$ in $G$ is defined as $W = N_G(K)/C_G(K)$ where $N_G(K)$ denotes the normalizer of $K$  and $C_G(K)$ the centralizer of $K$ in $G$.
\begin{theorem}\label{weylequivariant}
	Let $(G,K)$ be a free extension pair over a field $\bk$. Let $W$ be the Weyl group of $K$ in $G$ and suppose that the group inclusion of $K$ induces an algebra isomorphism $H^*(BK)^W \cong H^*(BG)$. Then for any  $G$-space $X$ such that $H^*(X)^G = H^*(X)$, the group $W$ acts on the $K$-equivariant cohomology of $X$ and there is a natural isomorphism of $H^*(BG)$-modules $H^*_K(X)^W \cong H^*_G(X)$.
\end{theorem}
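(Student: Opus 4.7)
The plan is to construct the $W$-action geometrically, verify that the tensor-product description of Proposition \ref{prop2.1} is $W$-equivariant, and then extract $W$-invariants.

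First, I would describe the $W$-action on $X_K = EG \times_K X$. The normalizer $N_G(K)$ acts on $EG \times X$ via $n \cdot (e, x) = (en^{-1}, nx)$; a direct check using that $K$ is normal in $N_G(K)$ shows that this action descends to $X_K$, and since $K$ itself acts trivially on $X_K$, it factors through $W = N_G(K)/K$. This yields the desired $W$-action on $\ec[K]{X}$. Moreover, the projection $\pi \colon X_K \to X_G$ is $W$-invariant (the $n$-action on the $G$-quotient absorbs $n$ into the orbit), while the bundle map $q \colon X_K \to BK = EG/K$ is $W$-equivariant for the canonical $W$-action on $BK$.

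Next, I would verify that the isomorphism of Proposition \ref{prop2.1}, given explicitly by $a \otimes b \mapsto \pi^*(a) \smile q^*(b)$, is $W$-equivariant when the domain $\ec{X} \otimes_{\cs} \cs[K]$ carries the trivial action on the first factor and the natural one on the second. This is immediate from the previous step: $\pi^*$ lands in the $W$-invariants of $\ec[K]{X}$, and $q^*$ intertwines the two $W$-actions. Consequently, the map $\ec{X} \to \ec[K]{X}^W$ induced by $\pi^*$ is automatically defined.

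Taking $W$-invariants of the isomorphism, the theorem reduces to showing
\begin{equation*}
\bigl(\ec{X} \otimes_{\cs} \cs[K]\bigr)^W \;=\; \ec{X} \otimes_{\cs} \cs[K]^W \;=\; \ec{X},
\end{equation*}
the last equality being the Weyl-invariant hypothesis. This interchange of $(\,)^W$ with the tensor product over $\cs$ is the main obstacle: in characteristic dividing $|W|$ it can fail for arbitrary $\cs$-modules, and in fact one can construct $\cs[H^*(BG)][W]$-modules for which no $\cs$-linear Weyl-invariant retraction $\cs[K] \to \cs$ exists. The hypothesis $H^*(X)^G = H^*(X)$ must therefore play a decisive role, and my strategy for the remaining step is to exploit the Serre spectral sequence of $X \to X_K \to BK$, whose $E_2$-page is $\cs[K] \otimes H^*(X)$ with $W$ acting only through the $\cs[K]$ factor (because $G$, and hence $W$, acts trivially on $H^*(X)$). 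Comparing $W$-equivariantly with the corresponding spectral sequence of $X \to X_G \to BG$, whose $E_2$-page is $\cs \otimes H^*(X) = \cs[K]^W \otimes H^*(X)$, produces an iso on $W$-invariants at $E_2$; the FEP forces the differentials to preserve this identification page by page, so that $\ec{X}$ recovers all $W$-invariants of $\ec[K]{X}$ with no spurious contributions from higher filtration pieces.
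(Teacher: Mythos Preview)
Your strategy coincides with the paper's: construct the $W$-action on $X_K$ geometrically via the $N_G(K)$-action, check that the isomorphism of Proposition~\ref{prop2.1} is $W$-equivariant with $W$ acting trivially on the $H^*_G(X)$ factor, and then pass to $W$-invariants. The paper dispatches the last step in one line, asserting that $\bigl(H^*_G(X)\otimes_{H^*(BG)}H^*(BK)\bigr)^W = H^*_G(X)$ simply because $W$ acts trivially on $H^*_G(X)$ and $H^*(BK)^W\cong H^*(BG)$; this is precisely the point you single out as delicate.

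Your caution there is well placed, but the spectral-sequence patch you sketch does not yet close the gap. An isomorphism $E_2(X_G)\xrightarrow{\ \sim\ }E_2(X_K)^W$ does not by itself force one on $E_\infty$ or on the abutment: each passage---to the next page, and from $E_\infty$ to the filtered object---implicitly uses exactness of $(\,\cdot\,)^W$, which is exactly what fails when $\lvert W\rvert$ is not invertible in $\bk$. The phrase ``the FEP forces the differentials to preserve this identification page by page'' thus repackages the difficulty rather than resolving it. What both your argument and the paper's tacitly rely on is that the $H^*(BG)[W]$-module structure on $H^*(BK)$ is \emph{extended from~$\bk$}: under the free-extension hypothesis one wants a $W$-equivariant Leray--Hirsch splitting $H^*(BK)\cong H^*(BG)\otimes_{\bk} H^*(G/K)$ with $W$ acting only through the finite-dimensional fibre $H^*(G/K)$. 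Granting that, the interchange is elementary linear algebra, since for any $H^*(BG)$-module $M$ with trivial $W$-action one gets $\bigl(M\otimes_{H^*(BG)}H^*(BK)\bigr)^W\cong M\otimes_{\bk}H^*(G/K)^W=M$. If you prefer to keep the spectral-sequence framing, the statement to actually prove is that the comparison map embeds $E_r(X_G)$ as a $\bk[W]$-direct summand of $E_r(X_K)$ for every $r$, which is strictly stronger and genuinely usable, unlike the mere matching of $W$-invariants at $E_2$.
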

\begin{proof}
	Fix an element $\sigma \in G$ and let  $\alpha = c_{\sigma}$ be the conjugation of this element on $G$.  By using Milnor's join construction of $BG$, it can be seen that this map induces the identity map in the cohomology of $H^*(BG)$ \cite[Ch II Thm 1.9]{milgramadem}. This function induces a map $\alpha_G$ on the homotopy quotient $EG \times_G X$ as $\alpha_G([z,x])= [E\alpha(z),x]$.  By Naturality of the Serre spectral sequence for the fibration $X \rightarrow X_G \rightarrow BG$, we get that the map $\alpha_G^*: H^*_G(X)\rightarrow H^*_G(X)$ is determined by the functions $B\alpha^*$ over $H^*(BG)$ and the identity  over the fiber and so $\alpha$ induces the identity on the $G$-equivariant cohomology of $X$.
 
Let $w \in W$. The conjugation $c_w$ induces  a well defined action of $W$ on both $H^*(BK)$ and $H^*_K(X)$ so that the canonical map $H^*_G(X) \otimes_{H^*(BG)} H^*(BK) \rightarrow H^*_K(X)$ of algebras is $W$-equivariant. Moreover, it is  an isomorphism by Proposition \ref{prop2.1}. Since the argument in the previous paragraph shows that $W$ acts trivially on $H^*_G(X)$, and using that $H^*(BK)^W \cong H^*(BG)$ by assumption by assumption,  there is an isomorphism $H^*_G(X) \cong H^*_K(X)^W$ as $H^*(BG)$ by restriction of the above isomorphism to fix points. In particular, we can identify $H^*_G(X) \subseteq H^*_K(X)^W$.  
\end{proof}

Observe that Proposition \ref{prop2.1}  and Theorem \ref{weylequivariant} can be summarized in Theorem \ref{teo1.1} as discussed at the beginning of this document. In the rest of this section, we discuss the case when $G$ is a semidirect product. We first start with the following result.
\begin{proposition}\label{lema2.1}
	Let $G$ and $K$ be groups. Suppose that there is a subgroup $N \subseteq G$ and a group homomorphism $\phi\colon K \rightarrow Aut(G)$ such that $\phi_k\vert_N$ is the identity for all $k \in K$. Then $(G,N)$ is a free extension pair if and only if $(G \rtimes_\phi K, N\times K)$ is a free extension pair. 
\end{proposition}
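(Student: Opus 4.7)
The plan is to show that both free extension properties are controlled by the same surjectivity onto $H^*(G/N)$, via a homotopy pullback square of classifying spaces.

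Because $\phi(k)\vert_N = \mathrm{id}$ for every $k\in K$, the set $\{(n,k) : n\in N,\, k\in K\}$ is a subgroup of $G\rtimes_\phi K$ isomorphic to the direct product $N\times K$, and a coset computation gives a canonical bijection $(G\rtimes_\phi K)/(N\times K) \cong G/N$. Hence the two free extension conditions both become the surjectivity of the fiber inclusion map of the respective fibrations
\[ G/N \to BN \to BG, \qquad G/N \to B(N\times K) \to B(G\rtimes_\phi K). \]
Moreover, the inclusions $N\hookrightarrow G$ and $N\times K \hookrightarrow G\rtimes_\phi K$ together with the identity on the common quotient $K$ give a morphism of group extensions, hence a commutative square
\[
\begin{tikzcd}
BN \rar{j} \dar & B(N\times K) \dar \\
BG \rar & B(G\rtimes_\phi K).
\end{tikzcd}
\]
I would verify that this square is a homotopy pullback by applying the Five Lemma to the long exact sequences of homotopy groups for the horizontal maps: their homotopy fibers are $(N\times K)/N = K$ and $(G\rtimes_\phi K)/G = K$ respectively, connected by the identity on $K$. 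Denoting by $i_1\colon G/N \to BN$ and $i_2\colon G/N \to B(N\times K)$ the fiber inclusions of the two fibrations, the pullback property then forces $i_2 = j\circ i_1$.

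Both implications follow from the resulting factorization $i_2^* = i_1^*\circ j^*$. Since $N\times K$ is a direct product, the first-factor projection $N\times K \to N$ induces a retraction $\pi\colon B(N\times K) \to BN$ of $j$, so $j^*\pi^* = \mathrm{id}$ and in particular $j^*$ is surjective. If $i_1^*$ is surjective, then for any $\alpha\in H^*(G/N)$ I would pick $\beta\in H^*(BN)$ with $i_1^*(\beta)=\alpha$ and set $\gamma=\pi^*(\beta)$, obtaining $i_2^*(\gamma) = i_1^*(j^*\pi^*\beta) = \alpha$. Conversely, if $i_2^*$ is surjective, any $\alpha\in H^*(G/N)$ is of the form $i_2^*(\gamma) = i_1^*(j^*\gamma)$, so $j^*(\gamma)\in H^*(BN)$ witnesses the surjectivity of $i_1^*$.

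I expect the main obstacle to be the rigorous verification that the square is a homotopy pullback with the induced identification on fibers yielding precisely $i_2 = j\circ i_1$; this hinges on the identification of the quotient $(G\rtimes_\phi K)/(N\times K)$ with $G/N$ and on tracking the fiber inclusions through the canonical comparison map. An alternative streamlined route for the reverse implication that avoids the Five Lemma is to apply Proposition~\ref{prop2.0} with $f = BG \to B(G\rtimes_\phi K)$, which directly transfers surjectivity of $H^*(B(N\times K)) \to H^*(G/N)$ to $H^*(BN) \to H^*(G/N)$.
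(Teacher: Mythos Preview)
Your proposal is correct and follows essentially the same strategy as the paper: identify $(G\rtimes_\phi K)/(N\times K)\cong G/N$, set up the commutative square relating the two fiber-inclusion maps, and use that $H^*(B(N\times K))\to H^*(BN)$ is surjective to transfer surjectivity in both directions. The paper's version is more economical---it writes the commutative square directly in cohomology (with $H^*(B(N\times K))\cong H^*(BN)\otimes H^*(BK)$ via K\"unneth) and observes that both vertical arrows are surjective, bypassing the homotopy-pullback and Five-Lemma verification you propose.
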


\begin{proof}
	Under these assumptions, there are canonical isomorphisms 
	\( H^*(B(N \times K))\cong H^*(BN) \otimes H^*(BK) \)   and \( H^*((G \rtimes_\phi K)/(N \times K)) \cong H^*(G/N) \)
	that fit in a commutative diagram
 
	\[
	\begin{tikzcd}
	H^*(BN) \otimes H^*(BK) \dar\rar &H^*((G\rtimes_\phi K)/(N \times K)) \dar \\
	H^*(BN) \rar & H^*(G/N) 
	\end{tikzcd}
	\]
 
	The vertical left arrow is induced by the inclusion of $N$ into $N \times K$ and hence it is surjective. As the vertical right arrow is an isomorphism, the top horizontal arrow is surjective if and only if the bottom arrow is surjective.
\end{proof}
It is not difficult to check from the definition of free extension pairs and the fact that the classifying space functor preserves finite products, that the product of two free extension pairs is again a free extension pair.

\begin{remark}\label{lem2.2}
	Let $G$ and $K$ be groups and $N \subseteq G$ a closed subgroup. Assume that there is a group homomorphism $\phi\colon K \rightarrow Aut(G)$ such that $\phi_k(N) \subseteq N$ for any $k \in K$.  Then the induced semi-direct products $(G\rtimes K, N \rtimes K)$ is a free extension pair if and only if $(G,N)$ is a free extension pair.
\end{remark}

Let $G = N \rtimes T$ be a semidirect product group. The $G$-equivariant cohomology can be computed stepwise as in the direct product case; namely, for a $G$-space $X$,  there is an isomorphism of $\bk$-algebras $H^*_G(X) \cong H^*_K(X_N)$. As a consequence of Proposition \ref{lema2.1} and Remark \ref{lem2.2}, we can recover the free extension property for the matrix groups $G_n$ and $K_n$ of  Proposition  \ref{prop_matrix} since $K_{n} \cong G_n \rtimes K_n/G_n$. Notice that the group $L = K_n/G_n$ is unique up to isomorphism for all $n \geq 1$. Considering the $n$-fold product $L^n$, we conclude the following corollary.
\begin{corollary}
	Let $L = K_n/G_n$. The pairs $(G_{n+1}, L^n)$ and $(K_{n}, L^n)$ are free extension pairs. In particular, for $\bk= \F_2$, if $M$ denotes a maximal elementary abelian $2$-subgroup of $L^n$, the pairs $(G_{n+1},M)$ and $(K_n,M)$ are free extension pairs.
\end{corollary}

\section{Torus actions and compatible involutions}\label{se:toruscomp}

In this section, we will consider cohomology with coefficients over $\bk = \F_2$. Let $X$ be a space with an action of a torus $T$ and let $\tau\colon X \rightarrow X$ be an involution. We say that $\tau$ is  \textit{compatible} if $\tau(g\cdot x) = g^{-1}\cdot \tau(x)$ for any $g \in T$ and $x \in X$. Examples of such spaces appear naturally as toric varieties in algebraic geometry, Hamiltonian torus actions on symplectic manifolds and topological generalizations of these spaces such as  quasitoric manifolds, torus manifolds and moment angle complexes \cite{buchstaber2002torus}, \cite{davis1991convex}, \cite{hattori2001theory}.

Let $G = T \rtimes_\tau \Z/2\Z$ and $K = \langle \tau \rangle = \Z/2\Z$ where $\tau$ acts on $T$ by inversion. The equivariant cohomology of a $T$-space with a compatible involution can be approached by studying the $G$-equivariant cohomology of $X$. We will show that $H^*(BG)$ is a polynomial algebra in $(n+1)$-variables as it will be canonically isomorphic to the cohomology of $B(T\times K)$ as stated in the following result. 

\begin{theorem}\label{teo41}
There is a unique graded algebra isomorphism $\theta: H^*(BG) \rightarrow H^*(BT)\otimes H^*(BK)$ that preserves the canonical maps induced by the inclusion $i$ of $T$ into $G$  and the projection of $G$ onto $K$ so the diagram
	\[
	\begin{tikzcd}
	0\rar & H^*(BK) \arrow[d,equal]\arrow[r,"p^*"] & H^*(BG)\arrow[d,"\theta"]\arrow[r,"i^*"] & H^*(BT)\arrow[d,equal]\rar & 0\\
	0 \rar & H^*(BK)  \arrow[r,"p^*"] & H^*(BT)\otimes H^*(BK)\arrow[r,"i^*"] & H^*(BT) \rar & 0
	\end{tikzcd}
	\]
commutes. In particular, the rows of the diagram are split exact sequences in positive degree cohomology. Moreover, there is a unique  one-to-one section $\varphi: H^*(BT) \rightarrow H^*(BG)$ that satisfies $\coker(\varphi) \cong H^*(BK)^+$. And the map $j^*\colon H^*(BG) \rightarrow H^*(BK)$  induced by the inclusion $j\colon \{e\} \times K \rightarrow G$ has kernel $(H^*(BT)^+)$, and the composite $j^*\circ p^*$ is the identity over $H^*(BK)$.

\end{theorem}
\begin{proof}
	Consider the fibration \(
	BT \rightarrow BG \rightarrow BK
	\).
	Since the action of $\pi_1(BK)$ on $H^*(BT)$ induces the trivial action on the cohomology $H^*(BT;\F_2)$, the Serre spectral sequence associated to this fibration has $E_2$-term
 
	\[E_2 \cong H^*(BK) \otimes H^*(BT) \Rightarrow H^*(BG).\]
 
	By degree reasons and the multiplicative property of the spectral sequence, the only possible non-zero differential $d_3$ is determined by $d_3: E_3^{0,2} \rightarrow E_3^{3,0}$. Choose generators $x_i \in H^2(BT)$  for $1 \leq i \leq n$ and $t \in H^1(BK)$. Under these identifications, we have that $d_3(x_i) = \alpha_i {t}^3$ with either $\alpha_i = 0$ or $\alpha_i = 1$.
	By considering the subextension $1 \rightarrow 1 \rtimes \Z/2 \rightarrow \Z/2$,
	we can check that $d_3 = 0$.
	We have then an isomorphism of $H^*(BK)$-modules 
 
	\[H^*(BK) \otimes H^*(BT) \cong H^*(BG).\]
	
	On the other hand, since $H^*(BT)$ is a finitely generated polynomial algebra, we can choose a multiplicative section ${\varphi}: H^*(BT) \rightarrow H^*(BG)$ of the surjective map $H^*(BG) \rightarrow H^*(BT)$ induced  by the inclusion map. Therefore, such  a map together with the canonical map $p^*\colon H^*(BK) \rightarrow H^*(BG)$ gives rise to an isomorphism of graded $H^*(BK)$-algebras
 
	\[{\theta}: H^*(BK) \otimes H^*(BT) \rightarrow H^*(BG)\]
 given by ${\theta}(\alpha \otimes \beta) = p^*(\alpha){\varphi}(\beta)$ as both $p^*$ and $\varphi$ are multiplicative.
 	
	Note that $j^*\varphi(x_i)$ is either zero or $t^2$ in $H^*(BK)$. We can then make $\varphi$ and hence $\theta$ unique as they are determined  by the generators $x_i$ and the condition that $j^* \varphi(x_i) = 0$ (if the latter is not the case, we just redefine a new $\varphi'$ as $\varphi + \varphi$ over the generators $x_i$ and $\theta$ will be determined by this $\varphi'$). This also implies that the composite \( j^* \theta \colon H^*(BK) \otimes H^*(BT) \rightarrow H^*(BK)\)
	has kernel $H^*(BT)^+$.
	Now notice that the composite 
 
 \[H^*(BT)\xrightarrow{\varphi} H^*(BG) \xrightarrow{i^*} H^*(BT),\] 
 where $i^*$ is induced by the inclusion $T \rightarrow G$, is the identity on $H^*(BT)$ since $i^*(t) = 0$ and $\varphi$ was constructed as a section of this map. This implies that $i^*$ is surjective and $\ker(i^*) \cong (H^*(BK)^+)$. Using a similar argument for the composite $H^*(BK)\xrightarrow{p^*} H^*(BG) \xrightarrow{j^*} H^*(BK)$, which is the identity over $H^*(BK)$, we conclude that the map 
	$p^*$ is surjective and  has cokernel  isomorphic to $H^*(BT)$. 
\end{proof}

Now we will study the algebraic properties of the $G$-equivariant cohomology as a module over $H^*(BG)$. Notice that for any $G$-space $X$, there is an induced involution $\tau$ on the space $X_T$; moreover, the spaces $X_G$ and $(X_T)_\tau$ are homotopically equivalent. Using this remark we prove the following result.

\begin{proposition}\label{teo418}
	Let $X$ be a $G$-space and assume that $X$ is $T$-equivariantly formal. Then $X$ is $G$-equivariantly formal if and only  if the Borel construction $X_T$ is $G/T$-equivariantly formal.
\end{proposition}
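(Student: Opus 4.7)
The plan is to exploit the canonical homotopy equivalence $X_G \simeq (X_T)_K$ induced by iterating the Borel construction stepwise, which identifies $H^*_G(X) \cong H^*_K(X_T)$. Under this identification, the restriction map $H^*_G(X) \to H^*(X)$ factors as
\[ H^*_K(X_T) \longrightarrow H^*(X_T) \longrightarrow H^*(X), \]
where the second arrow is surjective by the assumed $T$-equivariant formality of $X$.

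The ($\Leftarrow$) direction is then immediate: if $X_T$ is $K$-equivariantly formal, the first arrow above is surjective, so the composite is surjective and $X$ is $G$-equivariantly formal.

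For the substantive ($\Rightarrow$) direction, I would compare Poincar\'e series and use the Serre spectral sequence of the fibration $X_T \to X_G \to BK$. Leray--Hirsch combined with $T$-equivariant formality gives $P_{X_T}(t) = P_{BT}(t) P_X(t)$, and the hypothesis of $G$-equivariant formality yields $P_{X_G}(t) = P_{BG}(t) P_X(t)$. Applying Theorem \ref{teo41} to factor $P_{BG}(t) = P_{BT}(t) P_{BK}(t)$, these combine into the key identity $P_{X_G}(t) = P_{BK}(t) P_{X_T}(t)$. The spectral sequence has $E_2^{p,q} = H^p(BK; H^q(X_T))$ converging to $H^{p+q}(X_G)$, so $P_{X_G}(t) \leq P_{E_2}(t)$. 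The auxiliary estimate I would establish is that for any finite-dimensional $\F_2[K]$-module $V$ one has $\dim H^p(BK; V) \leq \dim V \cdot \dim H^p(BK; \F_2)$; this follows because $K = (\Z/2)^m$ is a $2$-group so $V$ admits a composition series with trivial successive quotients, and inequality in degree $p=0$ (where $H^0(BK; V) = V^K$) is strict unless the $K$-action on $V$ is trivial. Summing over $q$, $P_{E_2}(t) \leq P_{BK}(t) P_{X_T}(t)$, and combined with the identity above, all inequalities collapse to equalities.

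The double equality forces simultaneously collapse of the spectral sequence at $E_2$ and triviality of the induced $K$-action on $H^*(X_T)$. Consequently $E_\infty \cong H^*(BK) \otimes H^*(X_T)$ as bigraded $H^*(BK)$-modules, and a Leray--Hirsch type argument shows $H^*_K(X_T) \cong H^*(BK) \otimes H^*(X_T)$ with surjective edge map $H^*_K(X_T) \to H^*(X_T)$, which is exactly $K$-equivariant formality of $X_T$. The main obstacle I foresee is the auxiliary inequality for group cohomology with twisted coefficients and especially its equality case: one must argue cleanly that dimension saturation in a single bidegree already forces triviality of the $K$-action, and then that collapse together with triviality on the $E_2$-page genuinely yields surjectivity of the edge homomorphism rather than a mere numerical coincidence.
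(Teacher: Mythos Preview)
Your proposal is correct and follows essentially the same approach as the paper: both exploit the identification $H^*_G(X)\cong H^*_K(X_T)$ together with the decomposition $H^*(BG)\cong H^*(BT)\otimes H^*(BK)$ from Theorem~\ref{teo41} to reduce the statement to a dimension count. Your Serre spectral sequence argument for the ($\Rightarrow$) direction is more explicit than the paper's chain of module isomorphisms, making visible why the induced $K$-action on $H^*(X_T)$ must be trivial and why the edge map is surjective, points the paper leaves implicit.
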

\begin{proof}
	Firstly, let us suppose that $X$ is $G$-equivariantly formal. By Theorem \ref{teo41} and the above remark we get isomorphisms
	\begin{align*}
	H_\tau^*(X_T)\cong H^*_G(X) &\cong H^*(BG) \otimes H^*(X) \\
	&\cong H^*(BK)  \otimes H^*(BT) \otimes H^*(X) \\
	&\cong H^*(BK) \otimes H^*(X_T) 
	\end{align*}
	and so $X_T$ is $K$-equivariantly formal. By reversing the above sequence of isomorphisms, the converse of the statement holds. 
\end{proof}

Now we will apply this theorem to the \textit{conjugation spaces} introduced by Haussmann-Holm-Puppe \cite{haussconj}. Examples of such spaces include complex Grassmannians, toric manifolds, polygon spaces and symplectic manifolds with antisymplectic involutions. From \cite[Theorem.7.5]{haussconj}, it follows that $X_T$ is $\tau$-equivariantly formal. As a consequence of Theorem \ref{teo418} we obtain the following result.
\begin{corollary}\label{cor418}
	Let $X$ be a $T$-space which is also a conjugation space with a compatible involution~$\tau$. Then $X$ is $G$-equivariantly formal.
\end{corollary}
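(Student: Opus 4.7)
The plan is to invoke Proposition \ref{teo418} directly, since that proposition reduces the $G$-equivariant formality of $X$ to the combination of two simpler formality statements, and both will follow essentially for free from the conjugation-space machinery. Specifically, I would split the argument into three short steps.

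First, I would observe that $X$ is $T$-equivariantly formal. Since $X$ is a conjugation space, by definition $H^{\mathrm{odd}}(X;\F_2) = 0$. The Serre spectral sequence of the Borel fibration $X \to X_T \to BT$ has $E_2$-page concentrated in even total degrees (because both $H^*(X)$ and $H^*(BT)$ vanish in odd degrees), so all differentials vanish and the spectral sequence collapses at $E_2$; equivalently, $H^*_T(X)$ is free over $H^*(BT)$ and the restriction $H^*_T(X) \to H^*(X)$ is surjective.

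Second, I would handle the $\tau$-equivariant formality of the Borel construction $X_T$, where $\tau$ denotes the generator of $G/T \cong \Zd$. This is where I use the Haussmann--Holm--Puppe theorem quoted immediately above Corollary \ref{cor418}: since the $T$-action on $X$ is compatible with $\tau$, the Borel construction $X_T$ is itself a conjugation space whose conjugation involution is the one induced by $\tau$. Applying the same odd-cohomology vanishing argument to $X_T$ (now with the $\Zd$-action generated by $\tau$), the Serre spectral sequence of $(X_T) \to (X_T)_\tau \to B\Zd$ collapses at $E_2$, so $X_T$ is $\tau$-equivariantly formal.

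Third, I would combine these two statements via Proposition \ref{teo418}, which asserts precisely that under the identification $G = T \rtimes \langle \tau \rangle$, $G$-equivariant formality of $X$ is equivalent to $T$-equivariant formality of $X$ together with $\tau$-equivariant formality of $X_T$. Both hypotheses have just been verified, so the conclusion follows.

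The only real subtlety is bookkeeping: one must confirm that the involution on $X_T$ provided by the Haussmann--Holm--Puppe theorem is the same one appearing in Proposition \ref{teo418}, namely the involution on $X_T$ induced by $\tau$ via the compatibility relation $\tau(g \cdot x) = g^{-1}\cdot \tau(x)$. Both constructions produce this same involution, so no further verification is needed, and there is no significant technical obstacle beyond assembling the cited results.
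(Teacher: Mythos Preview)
Your overall strategy matches the paper's exactly: verify $T$-equivariant formality of $X$, verify $\tau$-equivariant formality of $X_T$, then invoke Proposition~\ref{teo418}. Steps 1 and 3 are fine.

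There is, however, a genuine gap in Step 2. The ``same odd-cohomology vanishing argument'' does \emph{not} transfer to the fibration $X_T \to (X_T)_\tau \to B\Zd$. Over $\F_2$ one has $H^*(B\Zd) = \F_2[w]$ with $|w|=1$, so the base cohomology is nonzero in \emph{every} degree, not just even ones. Even if $H^{\mathrm{odd}}(X_T)=0$, the $E_2$-page $H^p(B\Zd)\otimes H^q(X_T)$ is not concentrated in even total degree, and odd differentials such as $d_3\colon E_3^{p,q}\to E_3^{p+3,q-2}$ can run between nonzero groups. So the parity argument that worked for $BT$ (where the base cohomology is genuinely concentrated in even degrees) breaks down here.

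The fix is immediate and is what the paper is tacitly using: $\tau$-equivariant formality of a conjugation space is built into the definition. A conjugation space comes equipped with an $H^*$-frame $(\kappa,\sigma)$ in which $\sigma\colon H^{2*}(Y)\to H^{2*}_\tau(Y)$ is an additive section of the restriction map; since $H^{\mathrm{odd}}(Y)=0$, this makes the restriction $H^*_\tau(Y)\to H^*(Y)$ surjective. Applying this to $Y=X_T$ (which is a conjugation space by the Hausmann--Holm--Puppe theorem you cite) gives the $\tau$-equivariant formality you need, and the rest of your argument goes through.
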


\section{Reduction to 2-torus actions}\label{se:2torus}

In this section, we will use the results from Section 2 to study the equivariant cohomology for torus actions and compatible involutions by reducing to the maximal elementary abelian $2$-subgroup (or $2$-torus). For an $n$-dimensional torus $T$, let $T_2$ be the maximal $2$-torus subgroup in $T$ consisting of its elements of order $2$. Thus $K$ acts trivially on $T_2$ and so $H = T_2 \rtimes K = T_2 \times K \cong (\Zd)^n \times (\Zd) \cong (\Zd)^{n+1}$ is a maximal $2$-torus subgroup in $G = T  \rtimes K$. Let us choose generators $H^*(BT)\cong \F_2[x_1,\cdots,x_n]$ and  $H^*(BH) \cong \F_2[t_1,\ldots,t_n,y]$  so that $H^*(BT_2) \cong \F_2[t_1,\ldots, t_n]$ and the map induced by the inclusion $T_2 \rightarrow T$ maps $x_i$ to $t_i^2$ for all $1 \leq i \leq n$. We now compute explicitly the module structure of $H^*(BH)$ over $H^*(BG)$ as stated in the following lemma.
\begin{lemma}\label{lem1.4}
	The map $i^*\colon H^*(BG) \rightarrow H^*(BH)$ induced by the inclusion $i\colon H\rightarrow G$ is given by $i^*(x_i) = t_i^2 + t_iy$ for all $1 \leq i \leq n$ and $i^*(y) = y$.
\end{lemma}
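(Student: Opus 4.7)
The plan is to dispatch the two assertions separately, with the identification of $c_i$ handled by realising it as a characteristic class pulled back from a natural $O(2)$-representation of $G$.

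First I would settle $i^*(w_j) = w_j$ by pure functoriality. By construction in Theorem~\ref{teo41}, $w_j = p^*(\widetilde{w}_j)$ for $p \colon G \twoheadrightarrow K$ the semi-direct product projection and $\widetilde{w}_j$ a chosen generator of $H^*(BK)$. Since $p \circ i \colon H = T_2 \times K \to K$ is the standard projection, the equality is immediate.

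The heart of the argument is to identify $c_i$ with the pullback of the second Stiefel-Whitney class under a specific homomorphism $\rho_i \colon G \to O(2)$. For each $1 \leq i \leq n$, let $\rho_i$ project $T$ onto its $i$-th factor $S^1 \hookrightarrow SO(2) \subset O(2)$ and send $K$ into the reflection subgroup $\Zd \subset O(2)$ via the parity homomorphism $(a_1,\ldots,a_m) \mapsto \sum_j a_j$; this is a group homomorphism because each generator of $K$ acts on $T$ by inversion. Set $\widetilde{c}_i := \rho_i^*(w_2)$. By the uniqueness of the section $\varphi$ in Theorem~\ref{teo41}, showing $\widetilde{c}_i = c_i$ reduces to two verifications: first, the restriction of $\widetilde{c}_i$ along $T \hookrightarrow G$ equals $x_i$, since $w_2$ restricts on $BSO(2)$ to the mod-$2$ reduction of the first Chern class; and second, the restriction along $K \hookrightarrow G$ vanishes, since the canonical $2$-plane bundle pulled back along a reflection $\Zd \subset O(2)$ splits as $\mathbf{1} \oplus \mathrm{sign}$, on which $w_2 = 0$.

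It then remains to compute $i^*(c_i) = i^*(\widetilde{c}_i)$ explicitly. The composite $H \hookrightarrow G \xrightarrow{\rho_i} O(2)$ lands in the maximal $2$-torus of $O(2)$, realised as the diagonal subgroup $\{\mathrm{diag}(\pm 1, \pm 1)\} \cong \Zd \times \Zd$, on which $w_2$ restricts to the product $\alpha\beta$ of the two generators of $H^1$ dual to the coordinate axes. The key observation is that $-I \in T_2$ equals $\mathrm{diag}(-1,-1)$, the product of both diagonal generators, whereas each reflection generator of the image of $K$ is $\mathrm{diag}(1,-1)$. This forces the induced pullback $H^*(B(\Zd)^2) \to H^*(BH)$ to send $\alpha \mapsto t_i$ and $\beta \mapsto t_i + w_1 + \cdots + w_m$, so that $i^*(c_i) = t_i(t_i + w_1 + \cdots + w_m) = t_i^2 + t_i(w_1 + \cdots + w_m)$. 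The main obstacle will be bookkeeping this last embedding: the cross term $t_i(w_1 + \cdots + w_m)$ arises precisely because the rotation $-I$ has equal non-trivial character values on \emph{both} coordinate axes of the diagonal $2$-torus of $O(2)$, while the reflections from $K$ contribute only to one axis, and fixing the embedding carefully is what selects the correct symmetric combination rather than, say, $t_i^2$ or $t_i^2 + t_i w_1$.
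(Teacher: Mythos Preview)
Your argument is correct. Both you and the paper exploit the identification $S^1\rtimes\Zd\cong O(2)$, but the execution differs. The paper first reduces to $n=m=1$ via Theorem~\ref{teo41}, writes $i^*(x)=\alpha t^2+\beta tw+\gamma w^2$, and determines $\alpha,\gamma$ by restricting to the two obvious $\Zd$ subgroups; to pin down $\beta$ it passes to $SO(3)$ via the embedding $O(2)\hookrightarrow SO(3)$ and uses the known restriction $H^*(BSO(3))\to H^*(B(\Zd)^2)$, $\omega_2\mapsto t^2+tw+w^2$, which forces $\beta=1$. Your route is more direct: you stay in $O(2)$, identify $c_i$ itself as $\rho_i^*(w_2)$ by invoking the uniqueness clause of Theorem~\ref{teo41}, and then read off $i^*(c_i)$ from the Whitney formula $w_2\mapsto\alpha\beta$ on the diagonal $2$-torus of $O(2)$. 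This avoids the $SO(3)$ detour, handles general $m,n$ in one pass (via the parity map $K\to\Zd$), and makes transparent \emph{why} the cross term is $t_i\sum_j w_j$: it comes from $-I$ sitting diagonally in both coordinate characters while each reflection hits only one. The paper's route has the minor advantage of not needing to first prove $c_i=\rho_i^*(w_2)$, but that step in your argument is cheap given the uniqueness already established.
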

\begin{proof}
We will first prove the result for $n=1$. Let $G = T \rtimes K$ where $T = S^1$ write  and $H^*(BG) \cong H^*(BT) \otimes H^*(BK) = \mathbb{F}_2[x,y]$ as in Theorem \ref{teo41}. We have that $i^*(y)$ is the generator on $H^*(BK)$ in the decomposition $H^*(BH) \cong H^*(BT_2) \otimes H^*(BK)$. So we can write $H^*(BH) \cong \mathbb{F}_2[t,y]$ and $i^*(x) = \alpha t^2 + \beta ty + \gamma y^2$.  The decomposition on Theorem $\ref{teo41}$ implies that $\alpha = 1$ and $\gamma = 0$. To show that $\beta = 1$, consider the Whitney map $H^*(BO(2))\rightarrow H^*(BO(1)) \otimes H^*(BO(1))$ \cite[Theorem.1.2]{Brown} that sends the top class $\omega_2$ to $\omega_1 \otimes \omega_1$ and $\omega_1$ to $\omega_1 \otimes 1 + 1 \otimes \omega_1$.  These maps coincide (up to isomorphism) with the map induced by the inclusion $H \rightarrow G$ since $T \rtimes K \cong O(2)$. By identifying $\omega_2$ with $x$ and $\omega_1$ with $y$ on $H^*(BO(2))$, Theorem \ref{teo41} implies that $\omega_1 \otimes 1$ is sent to $t + y$ and $1 \otimes \omega_1$ is sent to $t$ as $i^*(y)  = i^*(\omega_1 \otimes 1 + 1 \otimes \omega_1)$ must be a generator in $H^*(BK)$ and zero in $H^*(BT_2)$. Therefore,    $i^*(x)$ conicides with $t(t+y) = t^2 +  ty$.
For the general case, let $G = (S^1)^n \rtimes \Zd$ and write $H^*(BG) \cong \F_2[x_1,\ldots, x_n,y]$. For $x_i \in H^2(BG)$, write

\[ i^*(x_i) = \sum_{k=1}^n \alpha_{k}t_k^2 + \sum_{k < l} \beta_{k,l}t_kt_l + \sum_{k=1}^n\gamma_k t_ky \in H^*(BH). \]

The general result follows from the previous case by considering an inclusion of $S^1 \rtimes \Zd$ into one of the $n$ different circle factors of $G$. 
\end{proof}
Theorem \ref{teo41} shows that the cohomology of $H^*(BG)$ is isomorphic to $H^*(B(T\times K))$. However, Lemma \ref{lem1.4} implies that their cohomology as modules over the Steenrod algebra are not isomorphic.

\begin{proposition}
	The mod $2$-cohomology rings of the classifying spaces $BG$ and $B(T\times K)$ are isomorphic as $\F_2$-algebras but not as modules over the Steenrod algebra.
\end{proposition}
\begin{proof}
	As in Lemma \ref{lem1.4}, we may assume $n =1$ first. For $x \in H^2(BG)$ generator associated to the cohomology of $BT$, write $Sq^1(x) = \alpha xy + \beta y^3$ for $\alpha,\beta\in \F_2$. By naturality of Steenrod operations, we have that $i^*(Sq^1(x)) = Sq^1(i^*(x))$ where $i^*$ is the map induced by the inclusion $ H \rightarrow G$. Therefore, $\alpha(t^2y +ty^2) + \beta y^3 = Sq^1(t^2 + ty) = t^2y + yt^2$  by Lemma \ref{lem1.4} and so $\alpha = 1, \beta = 0$. On the other hand, a similar argument applied to the inclusion $j\colon H \rightarrow T \times K $ shows that $Sq^1(x) = 0$ as $j^*(x) = t^2$.
\end{proof}

Since $(S^1, \Zd)$ is a free extension pair over $\F_2$, by Proposition \ref{lema2.1}, Remark \ref{lem2.2} and Theorem \ref{teo41} it follows that $(G,H)$ is a free extension pair as well. Therefore,  as in Proposition \ref{prop2.4},  for any $G$-space $X$, $H^*_G(X)$ is a $j$-th syzygy over $H^*(BG)$ if and only if $H^*_H(X)$ is a $j$-th syzygy over $H^*(BH)$.

Proposition \ref{prop2.1} shows that the $H$-equivariant cohomology of $X$ is determined by the $G$-equivariant cohomology of $X$.  As in the case for compact connected Lie groups and their maximal tori for rational coefficients, we can also describe the $G$-equivariant cohomology of $X$ in terms of the Weyl invariants of the $H$-equivariant cohomology of $X$. Notice that the Weyl group of $H$ in $G$ is  $W= N_G(H)/H$ since $C_G(H) = H$. We first prove the following proposition.
\begin{theorem}\label{weylprop}
	Let $W = N_G(H)/H$ be the Weyl group of $H$ in $G$. Then $W \cong (\Z/2)^n$ and there is an isomorphism of algebras $H^*(BG) \cong H^*(BH)^W$ where the action on the cohomology of $H^*(BH)$ is induced by the conjugation action  of $W$ on $H$.   
\end{theorem}
\begin{proof}
	Write $H = \langle (g_1,e),\ldots, (g_n, e), (1,\tau) \rangle$ where $e \in K$ denotes the identity and $g_i^2$ is an element of order two in $T$. We claim that $N_G(H) \cong (\Z/4)^n \rtimes \Zd$ where $(\Z/4)^n = \langle \theta_1, \ldots, \theta_n \rangle$ is generated by elements $\theta_i$ with $\theta_i^2 = g_i$, and $\Zd$ acts on $\Z/4$ by inversion. Notice that for any $(g, \sigma) \in G$ where $g \in T$ and $\sigma \in \langle \tau \rangle$, the element $(g,\sigma)$ commutes with every element in $H$ of the form $(g_i,e)$ and so we only need to look at the conjugation of the element $(1,\tau) \in H$ by $(g,\sigma)$. Namely, if $(g,\sigma) \in N_G(H)$  we have that  $(g,\sigma)(1,\tau)(g^{\sigma},\sigma) = (g^2, \tau) \in H$ and thus we get  $g \in \langle\theta_1, \ldots, \theta_n \rangle$. This implies that $W \cong (\Zd)^n$ is generated by the cosets $(\theta_i,e)H$ for $i=1,\ldots,n$. 
	
We now compute the induced action on the cohomology of $H^*(BH)$ by the Weyl group $W$. Choose a decomposition $H^*(BH) \cong \mathbb{F}_2[t_1,\ldots, t_n, y]$ where the variables $t_i$ are dual to the generators $g_i$ and $y$ is dual to $\tau$ in $H_1(BH)\cong \F_2[H]$. For a fixed $i \in \{1,\ldots, n\}$, notice that any $(\theta_i,e)H \in W$ acts trivially on the generators $(g_j,e) \in H$; on the other hand, we have that $(\theta_ig_i,e)H \cdot (1,\tau) = (\theta_i,e)(1,\tau)(\theta_ig_i,\tau) = (g_i,\tau)$ This implies that the induced map $\varphi_i$ by the action of $(\theta_i,e)H$ on the cohomology ring $\F_2[t_1,\ldots, t_n,w]$ is given by $\varphi_i(t_j) = t_j$ for $j \neq i$, $\varphi_i(t_i) = t_i + y$ and $\varphi_i(y) = y$. This follows by the duality between homology and cohomology.
	
	Let $\Lambda$ be the set of binary sequences $I =(i_1,\ldots, i_n)$ of length $n$, and write $t^I = t_1^{i_1}\cdots t_n^{i_n}$. The set $\{t^I:I \in \Lambda\}$ form a basis for $H^*(BH)$ as $H^*(BG)$-module. Consider an element $P = \sum_{I \in \Lambda} P_I t^I \in H^*(BH)^W$. Let $I(k)$ be the $k$-th entry of the sequence $I$. We will show that $P_I = 0$ if $I(k) \neq 0$ for some $1\leq k \leq n$.  Let $I \in \Lambda$ be such that $I(k) \neq 0$. Then $\varphi_k(P_It^I) = P_It^I + yP_It^{I_k}$ where $I_k(j) = I(j)$ if $j \neq k$ and $I_k(k) = 0$. Using this notation, we have that $\varphi_k(t^{I_k}) = t^{I_k}$ and  then the equation $P = \varphi_k(P)$ implies that $P_{I_k} + yP_I = P_{I_k}$ and so $P_I = 0$ as desired.
\end{proof}

Actually, the isomorphism of Proposition \ref{weylprop} can be extended to a natural isomorphism in equivariant cohomology as we state in the following consequence of Theorem \ref{weylequivariant}

\begin{corollary}
	Let $X$ be a $G$-space, $H$ the maximal $2$-torus in $G$ and $W$ the Weyl group of $H$ in $G$. Suppose that $G$ acts trivially on the cohomology of $X$. Then there is a natural isomorphism of $H^*(BG)$-algebras 
	\[H^*_G(X) \cong H^*_H(X)^W \]
	induced by the inclusion $H \rightarrow G$.
\end{corollary}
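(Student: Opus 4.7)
The plan is to deduce this corollary as a direct application of Theorem~\ref{weylequivariant} to the pair $(G,H)$. Three hypotheses must be verified: that $(G,H)$ is a free extension pair over $\F_2$; that there is an algebra isomorphism $H^*(BG)\cong H^*(BH)^W$; and that $G$ acts trivially on the cohomology of $X$.

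The first hypothesis was recorded at the beginning of Section~\ref{se:2torus}: since $(S^1, \Zd)$ is a free extension pair over $\F_2$, iterated application of Proposition~\ref{lema2.1}, Remark~\ref{lem2.2} and Theorem~\ref{teo41} shows that $(G, H)$ is a free extension pair for any $n,m \geq 1$. The second hypothesis is precisely Proposition~\ref{weylprop}, where the Weyl group was identified as $W \cong (\Z/2)^n$ and the invariants of $H^*(BH)$ under its conjugation action were computed explicitly and shown to coincide with $H^*(BG)$. The third hypothesis is granted in the statement of the corollary.

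With these three hypotheses in place, Theorem~\ref{weylequivariant} produces the natural isomorphism $H^*_G(X)\cong H^*_H(X)^W$ induced by the inclusion $H \hookrightarrow G$. To upgrade the conclusion from $H^*(BG)$-modules to $H^*(BG)$-algebras, I would note that in the proof of Theorem~\ref{weylequivariant} the isomorphism arises from the canonical map $H^*_G(X)\otimes_{H^*(BG)} H^*(BH) \to H^*_H(X)$ of Proposition~\ref{prop2.1}, which is visibly multiplicative and $W$-equivariant; taking $W$-invariants on both sides, using that $W$ acts trivially on $H^*_G(X)$ and that $H^*(BH)^W \cong H^*(BG)$ by Proposition~\ref{weylprop}, yields the algebra isomorphism. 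There is no genuine technical obstacle here, as the substantive content was already established in Theorem~\ref{weylequivariant}, Proposition~\ref{weylprop}, and the opening remarks of Section~\ref{se:2torus}; the corollary amounts to checking that the present setup fits that general framework.
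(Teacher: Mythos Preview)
Your proposal is correct and follows exactly the paper's approach: the corollary is stated there as a direct consequence of Theorem~\ref{weylequivariant}, with the required hypotheses supplied by Proposition~\ref{weylprop} and the free extension property established at the start of Section~\ref{se:2torus}. Your extra remark about upgrading from modules to algebras via the multiplicativity of the map in Proposition~\ref{prop2.1} is a welcome clarification, since Theorem~\ref{weylequivariant} as written only asserts a module isomorphism.
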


Let $M$ be a symplectic manifold with an action of a torus $T$. Denote by $T_2$ the maximal 2-subtorus of $T$. A consequence of the work of Atiyah \cite{A}  and Frankel \cite{Fr} in equivariant cohomology for Hamiltonian torus actions is that a symplectic action on $M$ is Hamiltonian if and only if $M$ is $T$-equivariantly formal over $\R$. Moreover, if $M$ admits a compatible anti-symplectic compatible involution $\tau$, the \textit{real locus} $M^\tau$ inherits a canonical action of $T_2$ and $M^\tau$ is $T_2$-equivariantly formal over $\F_2$ as shown in \cite{D}, and extended later in \cite{BGH}. This can be summarized in the following result. 

\begin{theorem}\label{TeoremSym}
	Let $M$ be a symplectic manifold with a symplectic action of a torus $T$ and a compatible anti-symplectic involution $\tau$. If $M$ is $T$-equivariantly formal over $\R$, the real locus $M^\tau$ is $T_2$-equivariantly formal over $\F_2$.
	
\end{theorem}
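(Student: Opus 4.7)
The plan is to package $M$ as a $G = T \rtimes \Zd$-space (using that compatibility of $\tau$ with the $T$-action is exactly the semidirect product relation) and then walk this action down the chain $G \supset H = T_2 \times \langle \tau\rangle \supset \langle \tau\rangle$, applying the free-extension-pair machinery at each step, before finishing with a Smith-theoretic argument to pass to the $\tau$-fixed locus. Throughout, coefficients are in $\F_2$, so some care is needed at the start to move from whatever field the $T$-equivariant formality is stated over to $\F_2$ — this will be acceptable provided $H^*(M;\Z)$ is $2$-torsion free, which is the implicit standing hypothesis the authors use to recover this classical result.

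First, I would verify that $M$ is $G$-equivariantly formal over $\F_2$. Since $M$ is $T$-equivariantly formal, Proposition \ref{teo418} reduces this to showing that the Borel construction $M_T$ is $\langle\tau\rangle$-equivariantly formal for the induced involution. In the symplectic / Hamiltonian setting the machinery of Haussmann-Holm-Puppe identifies $(M_T, \tau)$ as a conjugation space (this is the content Corollary \ref{cor418} is designed to absorb), and so the required $\langle\tau\rangle$-equivariant formality is free of charge; in the more topological, torsion-free formulation one verifies it directly from the Leray-Hirsch splitting of $H^*_T(M;\F_2)$ and the triviality of the induced $\Zd$-action on $H^*(M;\F_2)$. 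Either way, $M$ becomes $G$-equivariantly formal.

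Next, I would invoke Corollary \ref{lem3.2} (the $j=\infty$ case of Proposition \ref{prop2.4} applied to the free extension pair $(G,H)$ established in Section \ref{se:2torus}) to conclude that $M$ is in fact $H$-equivariantly formal, where $H = T_2 \times \langle\tau\rangle$. Concretely,
\[
H^*_H(M;\F_2) \;\cong\; H^*_G(M;\F_2) \otimes_{H^*(BG;\F_2)} H^*(BH;\F_2) \;\cong\; H^*(BH;\F_2) \otimes H^*(M;\F_2).
\]
The last step is the substantive one: deduce from the $H$-equivariant formality of $M$ that $M^\tau$ is $T_2$-equivariantly formal. Restricting the $H$-action to the $\langle\tau\rangle$-factor, the equivariant formality of $M$ implies in particular $\langle\tau\rangle$-equivariant formality, and Smith theory for a $\Zd$-action then yields $\dim_{\F_2} H^*(M^\tau;\F_2) = \dim_{\F_2} H^*(M;\F_2)$. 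Combining this dimension count with the compatible $T_2$-action and the Borel localization theorem, applied to the inclusion $M^\tau \hookrightarrow M$ viewed $H$-equivariantly (with $\tau$ acting trivially on $M^\tau$ and $T_2$ acting as the residual subgroup of $H$ on $M^\tau$), one extracts
\[
H^*_{T_2}(M^\tau;\F_2) \;\cong\; H^*(BT_2;\F_2) \otimes H^*(M^\tau;\F_2),
\]
which is precisely the $T_2$-equivariant formality of the real locus.

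The main obstacle I anticipate is this last step: the transition from the $H$-formality of $M$ to the $T_2$-formality of $M^\tau$ is not a formal consequence of the free-extension-pair machinery developed so far, and requires Smith theory for the $\langle\tau\rangle$-factor together with careful tracking of the $H^*(BT_2;\F_2)$-module structure during the Borel localization argument. A secondary, but strictly necessary, obstacle is the coefficient passage from whatever field witnesses the $T$-formality hypothesis down to $\F_2$; this is handled by assuming (or noting that one must assume, as the authors do in their expository remark following Theorem \ref{teo1.1}) that $H^*(M;\Z)$ contains no $2$-torsion.
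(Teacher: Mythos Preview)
The paper does not prove Theorem~\ref{TeoremSym}: it is stated as a known result, attributed to \cite{D} and \cite{BGH}, and serves as motivation for the paper's own generalization, Theorem~\ref{TeoA}. What you have written is essentially the specialization of the paper's argument for Theorem~\ref{TeoA} (via the theorem immediately preceding it) back to the symplectic setting.

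Two points on the comparison. For your final step, the paper does not rework Smith theory and Borel localization by hand; it simply invokes \cite[Thm.~2.1]{2tori} to pass from $H$-formality of $M$ to $(H/L)$-formality of $M^L$. Your sketch of this step is incomplete: the dimension equality $\dim H^*(M^\tau)=\dim H^*(M)$ together with Borel localization does not by itself force $H^*_{T_2}(M^\tau)$ to be free over $H^*(BT_2)$ --- that is exactly the content of the cited theorem, and it requires more than a rank count.

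More seriously, your alternative route through step~1 --- ``torsion-free integral cohomology, Leray--Hirsch splitting, and trivial $\Zd$-action on $H^*(M;\F_2)$'' --- does \emph{not} produce $G$-equivariant formality. The paper's own $S^2$ example immediately after Theorem~\ref{TeoA}, with the rotation action and the involution $(u,z)\mapsto(\bar u,-z)$, satisfies every one of those hypotheses (torsion-free, $T$-formal, $\tau$-formal, trivial $\Zd$-action on $\F_2$-cohomology) yet fails to be $G$-formal and has a real locus that is not $T_2$-formal. So the only mechanism the paper actually supplies for step~1 is the conjugation-space hypothesis (Corollary~\ref{cor418}), and consequently the paper's machinery recovers Theorem~\ref{TeoremSym} only under that additional assumption, not in the generality stated. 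The full symplectic statement genuinely rests on the external references; your first route through \cite{haussconj} is the correct one, but note that identifying a Hamiltonian $T$-manifold with anti-symplectic involution as a conjugation space already encodes the Duistermaat-type input, so this is not an independent proof.
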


Now we generalize Theorem \ref{TeoremSym} into a topological setting, and over the same coefficient field. Firstly, for a $X$ space $X$ with involution $\tau$, the \textit{real locus} of $X$ is defined as the fixed point subspace $X^\tau$.

Let $G$ be a compact Lie group, $X$ be a $G$-space and $\tau_X$ be an involution on $X$. We say that $\tau_X$ is a \textit{compatible involution} on $X$ if there is a group homomorphism $\tau_G\colon G \rightarrow G$ such that $\tau_G^2 = id$ and $\tau_X(g\cdot x) = \tau_G(g)\cdot\tau_X(x)$ for any $g \in G$ and $x\in X$. The condition of compatibility is equivalent to an action of the group $G_\tau = G \rtimes_\tau \Zd$ on $X$. To simplify our notation, the involutions $\tau_X$ and $\tau_G$ will be both referred as $\tau$, and their domains can be inferred from the context. Notice that the subgroup $G^\tau$ of $\tau$-fixed points of $G$ acts on the real locus $X^\tau$.

\begin{definition}
	Let  $H$ be a $\tau$-invariant closed subgroup of $G$. We say that $(G,H)$ is a \emph{$\tau$-free extension pair} if both $(G,H)$ and $(G^\tau,H^\tau)$ are free extension pairs.
\end{definition}

\begin{theorem}\label{teo4.7}
	Let $G$ be a compact Lie group and let $X$ be a $G$-space with a compatible involution  $\tau$.  Suppose that there is a $\tau$-invariant $2$-torus $H$ in $G$ such that $(G,H)$ is a $\tau$-free extension pair. For any splitting $H_\tau \cong H^\tau \times L$ and   for any integer $j \geq 1$,  if $H^*_{G_\tau}(X)$ is a $j$-th syzygy over $H^*(BG_\tau)$, then so is $H^*_{G^\tau}(X^L)$ as a module over $H^*(BG^\tau)$.
\end{theorem}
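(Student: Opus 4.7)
The plan is to chain three reductions to descend the syzygy hypothesis from $(G_\tau, X)$ to $(G^\tau, X^L)$: restrict along the inclusion $H_\tau \hookrightarrow G_\tau$, pass to the $L$-fixed set inside the $2$-torus $H_\tau = H^\tau\times L$, and finally extend back along $H^\tau \hookrightarrow G^\tau$.

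First, I would verify that $(G_\tau, H_\tau)$ is a free extension pair over $\F_2$. The existence of the splitting $H_\tau \cong H^\tau \times L$ forces $H_\tau$ to be abelian, hence a $2$-torus, so the restriction $H^*(BH_\tau) \to H^*(BH)$ is automatically surjective. Combining this with the $\tau$-free extension property of $(G,H)$ and Remark~\ref{lem2.2}, applied with the homomorphism $\Zd \to \mathrm{Aut}(G)$ given by $\tau$, yields that $(G_\tau, H_\tau) = (G \rtimes_\tau \Zd,\, H \rtimes_\tau \Zd)$ is a free extension pair. Proposition~\ref{prop2.4} then converts the hypothesis that $H^*_{G_\tau}(X)$ is a $j$-th syzygy over $H^*(BG_\tau)$ into the assertion that $H^*_{H_\tau}(X)$ is a $j$-th syzygy over $H^*(BH_\tau)$.

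Next, under the splitting $H_\tau \cong H^\tau \times L$, I would invoke the fixed-point reduction for $2$-torus actions from the body of results on elementary abelian $2$-groups cited in~\cite{ptori}: if $K_1\times K_2$ is a $2$-torus acting on a finite $(K_1\times K_2)$-CW complex $Y$ and $H^*_{K_1\times K_2}(Y)$ is a $j$-th syzygy over $H^*(B(K_1\times K_2))$, then $H^*_{K_1}(Y^{K_2})$ is a $j$-th syzygy over $H^*(BK_1)$. Applying this with $Y = X$, $K_1 = H^\tau$, and $K_2 = L$ gives the desired syzygy condition on $H^*_{H^\tau}(X^L)$. Finally, since $(G^\tau, H^\tau)$ is a free extension pair by the $\tau$-free extension hypothesis, a second application of Proposition~\ref{prop2.4} - this time lifting the conclusion back up - yields that $H^*_{G^\tau}(X^L)$ is a $j$-th syzygy over $H^*(BG^\tau)$.

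The main obstacle is the middle step. Restriction along a group inclusion preserves syzygy order symmetrically whenever the pair is a free extension, but the genuine change of space when passing from $X$ to $X^L$ has no analogous purely algebraic counterpart in the free extension machinery developed in Section~\ref{se:FEP}; it instead requires the Smith-theoretic analysis of the Atiyah--Bredon filtration for $2$-torus actions with $\F_2$-coefficients. Confirming that the theorem cited from~\cite{ptori} applies with the precise quantitative syzygy order needed, and in particular independently of which complement $L$ is chosen in the splitting of $H_\tau$, is the point that most demands care.
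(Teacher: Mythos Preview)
Your proposal is correct and follows essentially the same three-step route as the paper: descend from $G_\tau$ to $H_\tau$ via the free extension pair property, apply the $2$-torus fixed-point syzygy reduction to pass from $(H_\tau,X)$ to $(H^\tau,X^L)$, and then lift back from $H^\tau$ to $G^\tau$ using the other half of the $\tau$-free extension hypothesis. The only cosmetic differences are that the paper verifies $(G_\tau,H_\tau)$ is a free extension pair by a direct diagram chase rather than invoking Remark~\ref{lem2.2}, and it cites the $2$-torus fixed-point result as \cite[Thm.~2.1]{2tori} rather than \cite{ptori}.
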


\begin{proof}
	As $H$ is a $2$-torus, $(G,H)$ is a free extension pair if and only if $(G_\tau, H_\tau)$ is a free extension pair. In fact, it follows from the commutativity of the diagram
 
	\[ 
	\begin{tikzcd}
	H^*(BH_\tau)\dar \rar & H^*(G_\tau/H_\tau) \dar \\
	H^*(BH) \rar & H^*(G/H)
	\end{tikzcd}
	\]
	where the map $H^*(G_\tau/H_\tau) \rightarrow H^*(G/H)$ is an isomorphism and the canonical map $H^*(BH_\tau) \rightarrow H^*(BH)$ is surjective.
	
	If $H^*_G(X)$ is a $j$-th syzygy over $H^*(BG_\tau)$, it follows from Proposition \ref{prop2.1} that it is also a $j$-th syzygy as a module over $H^*(BH_\tau) \cong H^*(B(H \times \tau))$. We can use now the tools for syzygies for $2$-torus actions discussed in \cite{2tori} Consider a splitting $H_\tau \cong H^\tau \times L$ and identify $L$ with a subgroup of $H_\tau$ via this splitting. From \cite[Prop 2.4]{2tori} applied to the subgroup $L \subseteq H_\tau$, we obtain that $H_{H_\tau/L}^*(X^L) \cong H^*_{H^\tau}(X^L)$ is a $j$-th syzygy over $H^*(B(H_\tau/L)) \cong H^*(BH^\tau)$. Finally, as $(G,H)$ is a  $\tau$-free extension pair, from Proposition \ref{prop2.4} we get that $X^L$ is also a $j$-th syzygy over $H^*(BG^\tau)$.
\end{proof}

Notice that if $\tau$ acts trivially on $H$, the space $X^L$ is the real locus of $X$. 

Theorem \ref{teo4.7} can be applied to the groups $G = T \rtimes (\Zd)^n$ for any $n \geq 0$ which generalize torus actions and torus actions with compatible involutions where $H$ is the maximal $2$-torus in $G$. It also applies to $SO(n)$ with the canonical $\tau$-action that makes the isomorphism $SO(n) \rtimes_\tau \Zd \cong O(n)$ hold. In this case, $H$ is the maximal $2$-torus in $SO(n)$. In particular, we have a generalization of Theorem \ref{TeoremSym} given by the following result.

\begin{theorem}\label{TeoA}
	Let $G = T \rtimes \Zd$ and $X$ be a  $G$-space. If $H^*_G(X)$ is a $j$-th syzygy over $H^*(BG)$, then so is $H^*_{T_2}(X^\tau)$ as a module over $H^*(BT_2)$. In particular, if $X$ is $G$-equivariantly formal, then the real locus $X^\tau$ is $T_2$-equivariantly formal.
\end{theorem}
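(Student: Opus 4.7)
The plan is to realize Theorem~\ref{TeoA} as a direct specialization of the preceding general theorem, with the compact group there playing the role of $T$ equipped with the inversion involution.

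First I would fix the identifications. In the notation of the preceding theorem, taking the compact group to be $T$ and the involution $\tau$ to be inversion gives $G_\tau=T\rtimes_\tau\Zd$, which matches the group $G$ of Theorem~\ref{TeoA}. Since every element of $T_2$ is its own inverse, we have $T^\tau=T_2$ and $\tau$ acts trivially on the candidate $\tau$-invariant $2$-torus $H=T_2$. Consequently $H_\tau=T_2\times\Zd$ splits canonically as $H^\tau\times L$ with $H^\tau=T_2$ and $L=\langle\tau\rangle$, so $X^L=X^\tau$ is precisely the real locus.

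Next I would verify that $(T,T_2)$ is a $\tau$-free extension. The pair $(T,T_2)$ itself is a free extension pair over $\F_2$ by the standard example from Section~\ref{se:FEP}; the pair on the fixed side, $(T^\tau,T_2^\tau)=(T_2,T_2)$, is trivially a free extension because the homogeneous space is a point. Both conditions in the definition are therefore satisfied.

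Feeding these data into the preceding theorem yields the main assertion of Theorem~\ref{TeoA}: if $H^*_G(X)=H^*_{T_\tau}(X)$ is a $j$-th syzygy over $H^*(BG)$, then $H^*_{T^\tau}(X^L)=H^*_{T_2}(X^\tau)$ is a $j$-th syzygy over $H^*(BT_2)$. For the ``in particular'' clause, $G$-equivariant formality of $X$ makes $H^*_G(X)$ free, hence a $j$-th syzygy for every $j$; applying the main assertion with $j\geq\rank T_2$ and invoking Hilbert's syzygy theorem forces $H^*_{T_2}(X^\tau)$ to be free over $H^*(BT_2)$, which for $2$-torus actions on finite CW complexes over $\F_2$ coincides with $T_2$-equivariant formality. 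I expect the only non-routine point to be invoking this last equivalence between freeness and equivariant formality for $2$-tori; everything else is bookkeeping against the hypotheses of the general theorem.
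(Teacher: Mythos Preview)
Your proposal is correct and follows exactly the route the paper intends: Theorem~\ref{TeoA} is stated without proof immediately after the general $\tau$-free extension theorem, and the paragraph preceding it indicates it is to be read as the special case where the compact group is $T$ with the inversion involution and $H=T_2$. Your verification of the hypotheses (in particular $T^\tau=T_2$, the trivial splitting $H_\tau=T_2\times\langle\tau\rangle$, and the $\tau$-free extension property of $(T,T_2)$) and your handling of the ``in particular'' clause via Hilbert's syzygy theorem are the expected bookkeeping, and the equivalence of freeness with equivariant formality for $2$-tori over $\F_2$ is indeed the only point requiring an outside citation.
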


\begin{example}
	\normalfont
	Let $X$ be a $T$-space. Suppose $X$ is also a conjugation space with a compatible conjugation $\tau$. Then from Theorem \ref{TeoA} and Corollary \ref{cor418} we have that the real locus $X^\tau$ is $T_2$-equivariantly formal.
\end{example}

The assumptions of Theorem \ref{TeoA} cannot be weakened. For example,  If $X$ is a $G$-space such that it is simultaneously $T$-equivariantly formal and $\tau$-equivariantly formal, it is not necessarily true that $X$ is $G$-equivariantly formal or that its real locus $X^\tau$ is $T_2$-equivariantly formal as the next example shows.

\begin{example}
	\normalfont
	
	Let $X = \{ (u, z) \in \C \times \R \mid |u|^2 + |z|^2 = 1 \} = S^{2}$, let $T = S^1$  act on $X$ by $g \cdot (u, z) = (gu, z)$; more precisely, by scalar multiplication in the first factor. Let $\tau$ be the involution $\tau(u,z) = (\bar{u},-z)$ which is compatible with the torus action. Notice that $X^T = \{(0,1), (0,-1)\} \cong S^{0} $ and $X^\tau = \{ (-1,0), (1,0)\} \cong S^{0}$.  Therefore, the action of $T_2$ on  $X^\tau$ is the multiplication by $\pm 1$ and thus it is a free $T_2$-space. This implies that its $T_2$-equivariant cohomology is not free over $H^*(BT_2)$. On the other hand,  $H^*_T(X)$ is a free $H^*(BT)$-module since $\sum_{i\geq 0}\dim H^i(X) = \sum_{i \geq 0} H^i(X^T)$ \cite[Thm 3.10.4]{AP1993}
\end{example}

\subsection*{Acknowledgements}
I would like to thank Matthias Franz for his collaboration and helpful discussions to develop this project. I am also grateful to Jeffrey Carlson for his fruitful  discussions. Finally, I want to thank the anonymous referees for their comments and feedback on earlier versions of this document.

\bibliography{sn-bibliography}

\end{document}